\newtheorem{theorem}{\bf Theorem}[section]
\newtheorem{corollary}[theorem]{\bf Corollary}
\newtheorem{lemma}[theorem]{\bf Lemma}
\newtheorem{proposition}[theorem]{\bf Proposition}
\newtheorem{problem}[theorem]{\bf Problem}
\journal{Applied Mathematics and Computation}
\begin{document}

\begin{frontmatter}

\title{The Szeged Index and the Wiener Index of Partial Cubes with Applications to Chemical Graphs}



\author[mymainaddress,mysecondaryaddress,marusic]{Matev\v z \v Crepnjak}
\ead{matevz.crepnjak@um.si}
\author[mymainaddress]{Niko Tratnik}
\ead{niko.tratnik@um.si}

\address[mymainaddress]{Faculty of Natural Sciences and Mathematics, University of Maribor, Slovenia}
\address[mysecondaryaddress]{Faculty of Chemistry and Chemical Engineering, University of Maribor, Slovenia}
\address[marusic]{Andrej Maru\v si\v c Institute, University of Primorska, Slovenia}

\begin{abstract}
In this paper we study the Szeged index of partial cubes and hence generalize the result proved by V. Chepoi and S. Klav\v zar, who calculated this index for benzenoid systems.
It is proved that the problem of calculating the Szeged index of a partial cube can be reduced to the problem of calculating the Szeged indices of weighted quotient graphs with respect to a partition coarser than $\Theta$-partition. Similar result for the Wiener index was recently proved by S. Klav\v zar and M. J. Nadjafi-Arani. Furthermore, we show that such quotient graphs of partial cubes are again partial cubes. Since the results can be used to efficiently calculate the Wiener index and the Szeged index for specific families of chemical graphs, we consider $C_4C_8$ systems and show that the two indices of these graphs can be computed in linear time. 
\end{abstract}

\begin{keyword}
Szeged index \sep Wiener index \sep partial cube \sep benezenoid system \sep $C_4C_8$ system
\MSC[2010] 92E10 \sep  05C12
\end{keyword}

\end{frontmatter}

\linenumbers

\section{Introduction}
In the present paper the Szeged index and the Wiener index of partial cubes are investigated. Our main result extends a parallel result about the Szeged index of benzenoid systems \cite{chepoi-1997} to all partial cubes. In \cite{chepoi-1997} the corresponding indices were expressed as the sum of indices of three weighted quotient trees, while in the general case these trees are quotient graphs of a partial cube with respect to a partition coarser than $\Theta$-partition.

Partial cubes constitute a large class
of graphs with a lot of applications and includes, for example, many families of chemical graphs (benzenoid systems, trees, $C_4C_8$ systems, phenylenes, cyclic phenylenes, polyphenylenes). Therefore, our results can be used to calculate the indices of a particular family of chemical graphs in linear time.

The Wiener index and the Szeged index are some of the most commonly studied topological indices. Their history goes back to $1947$, when H. Wiener used the distances in the molecular graphs of alkanes to calculate their boiling points \cite{wiener}. This research has led to the Wiener index, which is defined as
$$W(G) = \sum_{\lbrace u, v \rbrace \subseteq V(G)} d_G(u,v).$$

\noindent
In that paper it was also noticed that for every tree $T$ it holds
$$W(T) = \sum_{e =uv \in E(T)}n_u(e)n_v(e),$$
where $n_u(e)$ denotes the number of vertices of $T$ whose distance to $u$ is smaller than the distance to $v$ and $n_v(e)$ denotes the number of vertices of $T$ whose distance to $v$ is smaller than the distance to $u$. Therefore, a proper generalization to any graph was defined in \cite{gut_sz} as

$$Sz(G) = \sum_{e=uv \in E(G)}n_u(e)n_v(e).$$
\noindent
This topological index was later named as the Szeged index. For some recent results on the Wiener index and the Szeged index see \cite{fozan,GKS,knor2,knor,LY,RM}. Moreover, some other variants of the Wiener index were introduced, for example Wiener polarity index (see \cite{DLS,LLSW,MSY}).

\noindent
The Wiener index, due to its correlation with a large number of physico-chemical properties of organic molecules and its interesting mathematical properties, has been extensively studied in both theoretical and chemical literature. Later, the Szeged index was introduced and it was shown that it also has many applications, for example in drug modelling \cite{drug} and in networks \cite{klavzar-2013,pisanski}.

The paper reads as follows. In the next section we give some basic definitions needed later. In Section \ref{main} we prove that the problem of calculating the Szeged index of a partial cube can be reduced to the problem of calculating the Szeged indices of weighted quotient graphs with respect to a partition coarser than $\Theta$-partition. It turns out that such quotient graphs are again partial cubes. We also consider a similar result for the Wiener index, which was proved in \cite{klavzar-2016} under the name partition distance. In Section \ref{uporaba} we show that the mentioned results generalize already known results for benzenoid systems (see \cite{chepoi-1996,chepoi-1997}) and apply them to $C_4C_8$ systems. Furthermore, we demonstrate with an example that the method can also be used to calculate the corresponding indices by hand.

\section{Preliminaries}
The {\em distance} $d_G(x,y)$ between vertices $x$ and $y$ of a graph $G$ is the length of a shortest path between vertices $x$ and $y$ in $G$. We also write $d(x,y)$ for $d_G(x,y)$.
\bigskip

\noindent
Two edges $e_1 = u_1 v_1$ and $e_2 = u_2 v_2$ of graph $G$ are in relation $\Theta$, $e_1 \Theta e_2$, if
$$d_G(u_1,u_2) + d_G(v_1,v_2) \neq d_G(u_1,v_2) + d_G(u_1,v_2).$$
Note that this relation is also known as Djokovi\' c-Winkler relation (see \cite{djoko,winki}).
The relation $\Theta$ is reflexive and symmetric, but not necessarily transitive.
We denote its transitive closure (i.e.\ the smallest transitive relation containing $\Theta$) by $\Theta^*$. Let $ \mathcal{E} = \lbrace E_1, \ldots, E_r \rbrace$ be the $\Theta^*$-partition of the set $E(G)$. Then we say that a partition $\lbrace F_1, \ldots, F_k \rbrace$ of $E(G)$ is \textit{coarser} than $\mathcal{E}$
if each set $F_i$ is the union of one or more $\Theta^*$-classes of $G$ (see \cite{klavzar-2016}).
\bigskip

\noindent
The {\em hypercube} $Q_n$ of dimension $n$ is defined in the following way: 
all vertices of $Q_n$ are presented as $n$-tuples $(x_1,x_2,\ldots,x_n)$ where $x_i \in \{0,1\}$ for each $1\leq i\leq n$ 
and two vertices of $Q_n$ are adjacent if the corresponding $n$-tuples differ in precisely one coordinate. Therefore, the \textit{Hamming distance} between two tuples $x$ and $y$ is the number of positions in $x$ and $y$ in which they differ.
\bigskip

\noindent
Let $G$ be a graph and $e=uv$ an edge of $G$. Throughout the paper we will use the following notation:
$$N_1(e|G) = \lbrace x \in V(G) \ | \ d_G(x,u) < d_G(x,v) \rbrace, $$
$$N_2(e|G) = \lbrace x \in V(G) \ | \ d_G(x,v) < d_G(x,u) \rbrace. $$
Also, for $s \in \lbrace 1,2\rbrace$, let
$$n_s(e|G) = |N_s(e|G)|.$$

\noindent
A subgraph $H$ of $G$ is called an \textit{isometric subgraph} if for each $u,v \in V(H)$ it holds $d_H(u,v) = d_G(u,v)$. Any isometric subgraph of a hypercube is called a {\em partial cube}. We write $\langle S \rangle$ for the subgraph of $G$ induced by $S \subseteq V(G)$. The following theorem puts forth two fundamental characterizations of partial cubes, cf. \cite{klavzar-book}:
\begin{theorem}  \label{th:partial-k} For a connected graph $G$, the following statements are equivalent:
\begin{itemize}
\item [(i)] $G$ is a partial cube.
\item [(ii)] $G$ is bipartite, and $\langle N_1(e|G) \rangle $ and $\langle N_2(e|G) \rangle$ are convex subgraphs of $G$ for all $e \in E(G)$.
\item [(iii)] $G$ is bipartite and $\Theta = \Theta^*$.
\end{itemize}
\end{theorem}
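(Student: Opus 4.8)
The plan is to prove the three statements equivalent by establishing the cycle $(i) \Rightarrow (ii) \Rightarrow (iii) \Rightarrow (i)$. Before starting, I would record two facts valid in any bipartite graph $G$ that are used throughout. First, for an edge $e=uv$ no vertex is equidistant from $u$ and $v$: since $u$ and $v$ lie in different classes of the bipartition, $d_G(z,u)$ and $d_G(z,v)$ have opposite parities, so they differ by exactly $1$ and $V(G) = N_1(e|G) \cup N_2(e|G)$. Second, a short computation with the definition of $\Theta$ shows that in a bipartite graph two edges $e=uv$ and $f=xy$ satisfy $e\,\Theta\,f$ if and only if $f$ has one endpoint in $N_1(e|G)$ and the other in $N_2(e|G)$; indeed, if both endpoints of $f$ lie on the same side then $d_G(u,x)+d_G(v,y)=d_G(u,y)+d_G(v,x)$, while if $f$ crosses the two sides the two sums differ by $2$. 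This reformulation of $\Theta$ as ``crossing the partition $\{N_1(e|G),N_2(e|G)\}$'' is the common thread of the argument.

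For $(i) \Rightarrow (ii)$ I would fix an isometric embedding of $G$ into some $Q_n$. Bipartiteness is inherited because $Q_n$ is bipartite and a subgraph of a bipartite graph is bipartite. For an edge $e=uv$ whose endpoints differ in coordinate $k$, the fact that distances are the Hamming metric gives $N_1(e|G) = \{\, z \in V(G) \mid z_k = u_k \,\}$, the trace on $G$ of a hypercube halfspace. A shortest path in $G$ is a shortest path in $Q_n$, and a shortest path in $Q_n$ between two vertices that agree in coordinate $k$ never changes coordinate $k$; hence such a path stays inside $N_1(e|G)$, proving $\langle N_1(e|G)\rangle$ convex, and symmetrically for $\langle N_2(e|G)\rangle$.

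For $(ii) \Rightarrow (iii)$ I am given bipartiteness, so it remains to prove $\Theta$ transitive. Using the reformulation above, it suffices to show that if $e\,\Theta\,f$ then the two edges induce the same partition of $V(G)$, that is $\{N_1(e|G),N_2(e|G)\} = \{N_1(f|G),N_2(f|G)\}$. Granting this, transitivity is immediate: if $e\,\Theta\,f$ and $f\,\Theta\,g$ then $e$, $f$, $g$ all determine the same partition, and since the endpoints of $g$ always lie on opposite sides of its own partition, $g$ crosses the partition of $e$, i.e. $e\,\Theta\,g$. The equality of partitions is exactly where convexity is needed: for a crossing edge $f=xy$ with $x\in N_1(e|G)$ and $y\in N_2(e|G)$, convexity of the two halves forces every shortest path between the two sides to cross only once, and from this one deduces $d_G(z,x) < d_G(z,y)$ for every $z\in N_1(e|G)$ and the reverse for every $z \in N_2(e|G)$, giving $N_1(f|G)=N_1(e|G)$. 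I expect this step to be the main obstacle, since it is the only place where the geometric content of convexity, rather than mere bipartiteness, is exploited.

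Finally, for $(iii) \Rightarrow (i)$ I would carry out the standard coordinatization. Let $F_1,\dots,F_k$ be the $\Theta$-classes, which by hypothesis coincide with the $\Theta^*$-classes. A direct distance computation shows that no two edges on a common shortest path are in relation $\Theta$, hence a shortest path meets each class $F_i$ at most once. Combined with the crossing reformulation, this yields that each $F_i$ is the set of edges between $N_1(e|G)$ and $N_2(e|G)$ for any $e\in F_i$, that $\langle N_1(e|G)\rangle$ and $\langle N_2(e|G)\rangle$ are connected, and therefore that $G - F_i$ has exactly two components. Fixing a base vertex $w_0$ and letting the $i$-th coordinate of $\varphi(z)$ record on which side of $F_i$ the vertex $z$ lies defines a map $\varphi\colon V(G)\to Q_k$. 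It then remains to check that $\varphi$ is an isometry onto its image: the number of coordinates in which $\varphi(z)$ and $\varphi(z')$ differ equals the number of classes separating $z$ and $z'$, every $z$--$z'$ path must cross each separating class, and a shortest path crosses each separating class exactly once, so $d_G(z,z')$ equals the Hamming distance of $\varphi(z)$ and $\varphi(z')$. This exhibits $G$ as an isometric subgraph of $Q_k$, closing the cycle.
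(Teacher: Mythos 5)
The paper itself offers no proof of this theorem: it is stated as a known characterization, with item (ii) attributed to Djokovi\'c (1973) and (iii) to Winkler (1984), and a citation to the Handbook of Product Graphs. So the comparison here is against the classical literature proofs rather than against anything in the paper, and your proposal is in essence a correct reconstruction of exactly those proofs. Your two preliminary facts are right and are the standard backbone: in a bipartite graph every vertex lies in $N_1(e|G)\cup N_2(e|G)$, and $e\,\Theta\,f$ holds if and only if $f$ has one endpoint in $N_1(e|G)$ and the other in $N_2(e|G)$. The implication $(i)\Rightarrow(ii)$ via hypercube halfspaces is complete as written, and $(iii)\Rightarrow(i)$ is Winkler's coordinatization, with all the needed ingredients present (no two edges of a shortest path are $\Theta$-related, hence each class is met at most once; each class $F_i$ is a cut whose removal leaves exactly two components; the side-indicator map is an isometric embedding into $Q_k$). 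The only step you leave schematic is the crux of $(ii)\Rightarrow(iii)$, and it does close in one move: if $f=xy$ with $x\in N_1(e|G)$, $y\in N_2(e|G)$, and some $z\in N_1(e|G)$ had $d_G(z,y)<d_G(z,x)$, then by bipartiteness $d_G(z,y)=d_G(z,x)-1$, so appending the edge $yx$ to a shortest $z$--$y$ path gives a shortest $z$--$x$ path through $y\notin N_1(e|G)$, contradicting convexity of $\langle N_1(e|G)\rangle$; this gives $N_1(f|G)=N_1(e|G)$ and $N_2(f|G)=N_2(e|G)$, after which transitivity of $\Theta$ is immediate, as you say. With that detail filled in, the cycle $(i)\Rightarrow(ii)\Rightarrow(iii)\Rightarrow(i)$ is sound.
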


\noindent Note that the characterization $(ii)$ is due to Djokovi\' c (1973), and $(iii)$ to Winkler (1984).
Is it also known that if $G$ is a partial cube and $E$ is a $\Theta$-class of $G$, then $G - E$ has exactly two connected components, namely $\langle N_1(e|G) \rangle $ and $\langle N_2(e|G) \rangle$, where $ab \in E$. For more information about partial cubes see \cite{klavzar-book}. 
\bigskip

\noindent
Let $H$ and $G$ be arbitrary graphs. Then a mapping $\alpha : V(H) \to V(G)$ is
an {\it isometric embedding}, if $d_H(u,v) = d_G(\alpha(u),\alpha(v))$ 
for each $u,v \in V(H)$.

\noindent
Suppose $\Pi$ is a partition of the vertex set of a graph $G$. The \textit{quotient graph} $G / \Pi$ is a graph with
vertex set $\Pi$, and for which distinct classes $C_1,C_2 \in \Pi $ are adjacent if some vertex in $C_1$ is
adjacent to a vertex of $C_2$. If $F \subseteq E(G)$, let $\Pi_F$ be the partition of $V(G)$ whose classes are the vertices of the connected components of $G - F$. The graph $G / \Pi_F$ is also denoted by $G / F$.
\bigskip



\noindent Using the notation defined above, the {\em Szeged index} of a graph $G$ is defined as $$Sz(G) =\sum_{e \in E(G)}n_1(e|G) n_2(e|G).$$

\noindent
Now we extend the definitions of the Wiener index and the Szeged index to weighted graphs as follows. Let $G$ be a graph and let $w:V(G)\rightarrow {\mathbb R}^+$  and $w':E(G)\rightarrow {\mathbb R}^+$ be given functions. Then $(G,w)$ and $(G,w,w')$ are a {\em vertex-weighted graph} and a {\em vertex-edge-weighted graph}, respectively. The \textit{vertex-weighted Wiener index} \cite{klavzar-1997} of $(G,w)$ is defined as
$$W(G,w) = \frac{1}{2} \sum_{u \in V(G)} \sum_{v \in V(G)} w(u)w(v)d_G(u,v).$$

\noindent
Furthermore, if $e=uv$ is an edge of $G$ and $s \in \lbrace 1, 2\rbrace$, set
\begin{eqnarray*}
n_s(e|(G,w)) & = & \sum_{x \in N_s(e|G)} w(x).
\end{eqnarray*}

\noindent
The \textit{Szeged index of a vertex-edge-weighted graph} \cite{tratnik} is defined as
\begin{eqnarray*}
Sz(G,w,w') & = & \sum_{e \in E(G)} w'(e)n_1(e|(G,w))n_2(e|(G,w)). 
\end{eqnarray*}
If $G$ is a partial cube and $E$ a $\Theta$-class, then for any two edges $e,f \in E$ and $s \in \lbrace 1, 2\rbrace$ it holds
$$N_s(e|G) = N_s(f|G).$$
Therefore, it is legitimate to define
$$n_s(E|G) = n_s(e|G),$$
$$n_s(E|(G,w)) = n_s(e|(G,w)),$$
where $e \in E$.

\section{The Szeged index and the Wiener index of partial cubes}
\label{main}

In this section we prove the main result of the paper. We start with the following basic proposition, the details can be found in \cite{klavzar-2015}.

\begin{proposition}\label{prop1}
Let $G$ be a partial cube with $n$ vertices and let $E_1, E_2, \ldots, E_r$ be its $\Theta$-classes. Then
$$W(G) = \sum_{i = 1}^{r}n_1(E_i|G)n_2(E_i|G)$$
and
$$Sz(G) = \sum_{i = 1}^{r}|E_i|n_1(E_i|G)n_2(E_i|G).$$
\end{proposition}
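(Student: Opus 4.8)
The plan is to treat the two formulas separately: the Szeged identity is a direct regrouping of the defining sum, while the Wiener identity rests on the standard cut argument and needs one genuine structural input.

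\medskip

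For the Szeged index I would start from $Sz(G)=\sum_{e\in E(G)}n_1(e|G)n_2(e|G)$ and use that, since $G$ is a partial cube, we have $\Theta=\Theta^*$ by Theorem~\ref{th:partial-k}(iii); hence the $\Theta$-classes $E_1,\ldots,E_r$ form a partition of $E(G)$. Splitting the sum along this partition gives $Sz(G)=\sum_{i=1}^{r}\sum_{e\in E_i}n_1(e|G)n_2(e|G)$. The crucial observation, already recorded in the preliminaries, is that $N_s(e|G)=N_s(f|G)$ whenever $e,f$ belong to the same $\Theta$-class, so $n_s(e|G)=n_s(E_i|G)$ is constant across $e\in E_i$. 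Each inner sum therefore collapses to $|E_i|\,n_1(E_i|G)n_2(E_i|G)$, which yields the claimed expression. This half is essentially bookkeeping.

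\medskip

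For the Wiener index the key step is to rewrite every distance as a count of separating $\Theta$-classes, namely to prove that for all $u,v\in V(G)$,
$$d_G(u,v)=\bigl|\{\,i : u,v \text{ lie in different components of } G-E_i\,\}\bigr|.$$
Since $G$ is a partial cube it embeds isometrically into a hypercube $Q_n$, under which each $\Theta$-class corresponds to one coordinate direction and each edge of $G$ flips exactly the coordinate of its own class; thus $d_G(u,v)$ equals the Hamming distance between the images, i.e.\ the number of coordinates — equivalently $\Theta$-classes — in which $u$ and $v$ differ. Concretely, removing a $\Theta$-class $E_i$ splits $G$ into exactly the two convex parts $\langle N_1(E_i|G)\rangle$ and $\langle N_2(E_i|G)\rangle$, so any $u$–$v$ path must traverse an edge of each separating class, while a geodesic traverses each such class exactly once. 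Justifying this distance-counting identity from the convexity and isometric-embedding properties is the main obstacle; everything else is reindexing.

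\medskip

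Granting the identity, I would substitute it into $W(G)=\sum_{\{u,v\}\subseteq V(G)}d_G(u,v)$ and interchange the order of summation over pairs and over classes, obtaining
$$W(G)=\sum_{i=1}^{r}\bigl|\{\,\{u,v\} : u\in N_1(E_i|G),\ v\in N_2(E_i|G)\,\}\bigr|.$$
Each unordered pair $\{u,v\}$ is counted once for precisely those indices $i$ whose class separates it, so the inner cardinality is the number of pairs with one endpoint in $N_1(E_i|G)$ and the other in $N_2(E_i|G)$, which equals $n_1(E_i|G)\,n_2(E_i|G)$. This delivers $W(G)=\sum_{i=1}^{r}n_1(E_i|G)n_2(E_i|G)$ and completes the proof.
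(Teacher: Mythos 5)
Your proposal is correct, but it cannot be matched line-by-line against the paper's argument for a simple reason: the paper does not prove Proposition \ref{prop1} at all --- it states it as a basic known result and defers the details to \cite{klavzar-2015}. What you have written is essentially the standard cut-method proof that underlies that citation. Your Szeged half is exactly right and is pure bookkeeping: $\Theta=\Theta^*$ (Theorem \ref{th:partial-k}(iii)) makes the $\Theta$-classes a partition of $E(G)$, and the constancy of $N_s(\cdot\,|G)$ on each class (recorded in the paper's preliminaries) collapses each inner sum to $|E_i|\,n_1(E_i|G)\,n_2(E_i|G)$. Your Wiener half correctly isolates the one genuine ingredient, the identity $d_G(u,v)=\bigl|\lbrace i : E_i \text{ separates } u \text{ and } v\rbrace\bigr|$, and justifies it the standard way: under an isometric embedding into a hypercube each $\Theta$-class corresponds to a single coordinate, so the Hamming distance counts separating classes. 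Note that your final interchange of summation also silently uses the fact, stated in the paper's preliminaries, that $G-E_i$ has exactly two components, namely $\langle N_1(E_i|G)\rangle$ and $\langle N_2(E_i|G)\rangle$; without this, the set of pairs separated by $E_i$ would not have cardinality $n_1(E_i|G)\,n_2(E_i|G)$. The only place where your write-up is a sketch rather than a complete proof is the claim that edges of $G$ in the same $\Theta$-class flip the same hypercube coordinate while edges in different classes flip different ones; this is true and standard, but a self-contained version should verify it by computing the four distances in the definition of $\Theta$ for two edges flipping the same (respectively different) coordinates. In short: your argument is sound, it supplies a proof where the paper only supplies a reference, and it follows the same route as the cited literature.
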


%

\noindent
The above proposition can be useful for deriving closed formulas for some families of graphs. But since the number of $\Theta$-classes of a partial cube can be very large, in the rest of this section we generalize this result such that the computation is possible for any partition of $E(G)$ coarser than $\Theta$-partition. As a consequence, the calculation of these two indices will be much more efficient. To prove the main theorem, some preparation is needed. The following lemma claims that every quotient graph obtained from a partition coarser than the $\Theta$-partition is again a partial cube. 
\begin{lemma}
\label{partial}
Let $G$ be a partial cube. If $\lbrace F_1, \ldots, F_k \rbrace$ is a partition coarser than the $\Theta$-partition, then $G / F_i$ is a partial cube for each $i$, $1 \leq i \leq k$.
\end{lemma}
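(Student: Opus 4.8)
The plan is to fix an index $i$ and write $F = F_i$ as a union of $\Theta$-classes, say $F = E_{j_1} \cup \cdots \cup E_{j_m}$, and set $J = \{j_1,\ldots,j_m\}$. Since $G$ is a partial cube it embeds isometrically in a hypercube $Q_n$ in such a way that each $\Theta$-class $E_j$ corresponds to one coordinate direction; thus two vertices lie on opposite sides of $E_j$ (one in $N_1(\cdot|G)$, the other in $N_2(\cdot|G)$) exactly when their $j$-th coordinates differ. Under this embedding $d_G(x,y)$ is the Hamming distance of the images, and a shortest $x$--$y$ path crosses exactly the $\Theta$-classes separating $x$ and $y$, each of them once. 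These standard facts about partial cubes will be the backbone of the argument.

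The first step is to describe the connected components of $G-F$. I would prove that two vertices lie in the same component of $G-F$ if and only if they agree on all coordinates indexed by $J$. In one direction, any edge of $G-F$ belongs to a $\Theta$-class outside $J$ and hence leaves every $J$-coordinate unchanged, so the $J$-coordinates are constant along paths in $G-F$. Conversely, if $x$ and $y$ agree on all $J$-coordinates, then a shortest $x$--$y$ path in $G$ crosses only classes $E_j$ with $j\notin J$, so it avoids $F$ and lies entirely in $G-F$. Consequently the vertices of $G/F$ are in bijection with the $J$-tuples realised by vertices of $G$.

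Then I would define $\alpha : V(G/F) \to Q_m$ sending each component $C$ to the common $J$-coordinate tuple of its vertices, and show $\alpha$ is an isometric embedding. It is well defined and injective by the component description. For the isometry I would argue two inequalities. Any edge of $G/F$ joins two components $C_1,C_2$ via an edge $xy$ of $G$; since $x,y$ lie in different components this edge is in $F$, hence in some $E_j$ with $j\in J$, so it flips precisely one coordinate of $\alpha$. Therefore every $C_1$--$C_2$ walk in $G/F$ has length at least the Hamming distance $d_H(\alpha(C_1),\alpha(C_2))$, giving $d_{G/F}(C_1,C_2)\ge d_H(\alpha(C_1),\alpha(C_2))$. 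For the reverse inequality I would pick representatives $u\in C_1$, $v\in C_2$ and project a shortest $u$--$v$ path of $G$ onto $G/F$: non-$F$ edges stay inside a single component and collapse, while the number of $F$-edges on the path equals the number of $J$-coordinates in which $u$ and $v$ differ, i.e.\ $d_H(\alpha(C_1),\alpha(C_2))$. This yields a $C_1$--$C_2$ walk of exactly that length, so $d_{G/F}(C_1,C_2)\le d_H(\alpha(C_1),\alpha(C_2))$. Since $G$ is connected, so is $G/F$, and the isometric embedding $\alpha$ into $Q_m$ exhibits $G/F$ as a partial cube.

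I expect the main obstacle to be the isometry, specifically the upper bound: one must be certain that projecting a shortest path of $G$ wastes no steps, which rests on the fact that a shortest path in a partial cube crosses each $\Theta$-class at most once, so the $J$-coordinates advance monotonically and the projected walk realises exactly the Hamming distance. The component characterisation is the other delicate point and feeds directly into this estimate. As an alternative one could instead verify characterisation $(iii)$ of Theorem~\ref{th:partial-k} for $G/F$ --- bipartiteness follows at once from the single-coordinate-flip property, after which $\Theta=\Theta^*$ would have to be checked --- but building the explicit embedding $\alpha$ appears to be the most economical route.
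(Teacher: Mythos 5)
Your proposal is correct and takes essentially the same approach as the paper: both assign to each component of $G - F_i$ the $0$--$1$ tuple recording which side of each $\Theta$-class contained in $F_i$ it lies on, and then verify this map is an isometric (Hamming-distance) embedding into a hypercube, your projection argument and edge-flips-one-coordinate bound being explicit versions of the paper's ``easily found path'' and missing-class contradiction. The only cosmetic difference is that you define the coordinates via the ambient hypercube embedding of $G$, whereas the paper defines them directly via the two components of $G - E_{i,j}$.
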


\begin{proof}
Let $F_i$ be the union of $\Theta$-classes $E_{i,1}, \ldots, E_{i,r(i)}$ and $j \in \lbrace 1, \ldots, r(i) \rbrace$. Since $G$ is a partial cube, the graph $G - E_{i,j}$ has exactly two connected components; denote these two components by $C_0^{j}$ and $C_1^{j}$. We define function $\ell_j: V(G / F_i) \rightarrow \lbrace 0, 1\rbrace$ in the following way: 
$$\ell_j(x) = \begin{cases}
0; & x \text{ is a subgraph of } C_0^j \\
1; & x \text{ is a subgraph of } C_1^j
 \end{cases}$$
 for every $x \in V(G / F_i)$. \\
Let $\ell : V(G / F_i) \rightarrow \lbrace 0, 1\rbrace^{r(i)}$ be defined as
$$\ell(x) = (\ell_1(x), \ldots, \ell_{r(i)}(x))$$
for each $x \in V(G / F_i)$.\\

Let $x,y \in V(G / F_i)$. One can easily find a path from $x$ to $y$ in $G / F_i$ which has length $\sum_{j = 1}^{r(i)} |\ell_j(x) - \ell_j(y)|$. Therefore,

$$d_{G / F_i}(x,y) \leq \sum_{j = 1}^{r(i)} |\ell_j(x) - \ell_j(y)|.$$
To prove the other inequality, let $P$ be a shortest path from $x$ to $y$ in $G / F_i$. Suppose that $|E(P)| < \sum_{j = 1}^{r(i)} |\ell_j(x) - \ell_j(y)|$. It follows that there exists $j \in \lbrace 1, \ldots, r(i) \rbrace$ such that $\ell_j(x) \neq \ell_j(y)$ and no edge of $P$ corresponds to the $\Theta$-class $E_{i,j}$. We obtain that there exists a path $Q$ in $G$ from a vertex in $x$ to a vertex in $y$, such that no edge of $Q$ is contained in $E_{i,j}$. Therefore, $x$ and $y$ are in the same connected component of the graph $G - E_{i,j}$. This is a contradiction, since $\ell_j(x) \neq \ell_j(y)$. Hence, 
$$d_{G / F_i}(x,y) = \sum_{j = 1}^{r(i)} |\ell_j(x) - \ell_j(y)|,$$
which is the Hamming distance. Obviously, $\ell$ is an isometric embbeding into $r(i)$-dimensional hypercube. Therefore, $G / F_i$ is a partial cube. \qed
\end{proof}

Let $G$ be a partial cube and let $\lbrace F_1, \ldots, F_k \rbrace$ be a partition coarser than the $\Theta$-partition. We next extend the quotient graphs $G / F_i$, $i \in \lbrace 1, \ldots, k \rbrace$, to weighted graphs as follows: 
\begin{enumerate}
\item for $x \in V(G / F_i)$, let $w_i(x)$ be the number of vertices in the connected component $x$ of $G - F_i$,
\item for $xy \in E(G / F_i)$, let $w_i'(xy)$ be the number of edges in $F_i$ with one end-point in $x$ and another in $y$.
\end{enumerate}
Now we are ready to state the following lemma. 
\begin{lemma}
\label{lemica}
Let $G$ be a partial cube and $\lbrace F_1, \ldots, F_k \rbrace$ a partition coarser than the $\Theta$-partition. Moreover, let $(G / F_i, w_i, w_i')$ be the weighted quotient graph for some $i \in \{1, \ldots, k \}$ and $\ell$ the function defined in the proof of Lemma \ref{partial}. Let $F_i$ be the union of $\Theta$-classes $E_{i,1}, \ldots, E_{i,r(i)}$ and $j \in \lbrace 1, \ldots, r(i) \rbrace$. Then
$$U_{i,j} = \{ xy \in E(G / F_i) \, | \, \ell_j(x) \neq \ell_j(y) \text{ and } \ell_t(x) = \ell_t(y) \text{ where } t \neq j \}$$
is a $\Theta$-class of $G / F_i$ and
$$|E_{i,j}| = \sum_{f \in U_{i,j}} w'_i(f).$$
Moreover, if $s \in \{ 1,2 \}$, then
$$n_s(E_{i,j} | G) = \sum_{x \in N_s(U_{i,j}|(G / F_i))}w(x).$$

\end{lemma}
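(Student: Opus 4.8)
The plan is to run everything through the coordinate labeling $\ell=(\ell_1,\dots,\ell_{r(i)})$ constructed in the proof of Lemma~\ref{partial}, under which $G/F_i$ embeds isometrically into the hypercube so that $d_{G/F_i}(x,y)$ equals the Hamming distance between $\ell(x)$ and $\ell(y)$. The key bookkeeping device is to push these labels down to $G$ itself: for $v\in V(G)$ let $c(v)$ denote the connected component of $G-F_i$ containing $v$, and recall that $\ell_j(c(v))=0$ exactly when $v$ lies in the component $C_0^j$ of $G-E_{i,j}$ and $\ell_j(c(v))=1$ when $v\in C_1^j$. This is well defined precisely because $F_i\supseteq E_{i,j}$, so the components of $G-F_i$ refine the two components of $G-E_{i,j}$ for every $j$.

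For the first assertion I would first observe that, since adjacency in $G/F_i$ forces Hamming distance one, every edge $xy\in E(G/F_i)$ has $\ell(x),\ell(y)$ differing in exactly one coordinate; hence $U_{i,1},\dots,U_{i,r(i)}$ partition $E(G/F_i)$, with $U_{i,j}$ being exactly the edges that flip coordinate $j$. I would then verify straight from the definition of $\Theta$ that two edges are $\Theta$-related iff they flip the same coordinate: substituting the labels into $d(u_1,u_2)+d(v_1,v_2)$ against $d(u_1,v_2)+d(v_1,u_2)$ and expanding the Hamming distances, the two sums differ by $2$ when the edges flip the same coordinate and are equal when they flip distinct coordinates. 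Because $G/F_i$ is a partial cube, $\Theta=\Theta^{*}$ is an equivalence relation, so its classes are exactly the nonempty $U_{i,j}$; and each $U_{i,j}$ is nonempty, since any $ab\in E_{i,j}\subseteq F_i$ has endpoints in distinct $F_i$-components $c(a),c(b)$ that agree in all coordinates but the $j$-th, so $c(a)c(b)\in U_{i,j}$. This $\Theta$-class/coordinate-direction dictionary is the step I expect to be the main obstacle: it is standard structure theory for partial cubes, but carrying it out honestly requires the careful Hamming expansion and the check that every coordinate is actually realized by an edge.

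For the edge-count identity I would show that, as $xy$ ranges over $U_{i,j}$, the edges of $F_i$ joining the component $x$ to the component $y$ are exactly the edges of $E_{i,j}$. Indeed an arbitrary $ab\in F_i$ lies in a unique class $E_{i,t}$, and membership in $E_{i,t}$ amounts to $a,b$ being on opposite sides of $G-E_{i,t}$, i.e. to $\ell_t(c(a))\neq\ell_t(c(b))$; if in addition $c(a)c(b)\in U_{i,j}$, the only coordinate in which $c(a),c(b)$ may differ is the $j$-th, forcing $t=j$. Conversely every $ab\in E_{i,j}$ gives $c(a)c(b)\in U_{i,j}$ by the previous paragraph, and each $F_i$-edge determines the pair of its endpoints' components uniquely, so distinct quotient edges collect disjoint sets of $F_i$-edges. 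Hence summing $w_i'(xy)$ over $U_{i,j}$ counts each edge of $E_{i,j}$ exactly once, giving $\sum_{f\in U_{i,j}}w_i'(f)=|E_{i,j}|$.

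The last identity follows by matching the two shores. Fix $e=ab\in E_{i,j}$ and the corresponding $xy=c(a)c(b)\in U_{i,j}$. The component of $(G/F_i)-U_{i,j}$ containing a vertex $z$ is determined by the value $\ell_j(z)$, while the component of $G-E_{i,j}$ containing a vertex $v$ is determined by $\ell_j(c(v))$, and by construction these two criteria read off the same value. Consequently $v\in N_s(e\,|\,G)$ if and only if $c(v)\in N_s(U_{i,j}\,|\,(G/F_i))$, so $N_s(e\,|\,G)$ is the disjoint union of the components $z\in N_s(U_{i,j}\,|\,(G/F_i))$. Taking cardinalities and using $w_i(z)=|V(z)|$ yields $n_s(E_{i,j}\,|\,G)=\sum_{z\in N_s(U_{i,j}\,|\,(G/F_i))}w_i(z)$. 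In short, once the first assertion is secured, the remaining two identities are pure bookkeeping resting on the single observation that $\ell_j(c(v))$ simultaneously records the side of $E_{i,j}$ in $G$ and the side of $U_{i,j}$ in $G/F_i$.
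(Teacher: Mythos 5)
Your proof is correct and takes essentially the same route as the paper: it uses the Hamming labeling $\ell$ from Lemma \ref{partial} to identify the nonempty $U_{i,j}$ as the $\Theta$-classes of $G/F_i$, and then establishes the two counting identities by matching each edge of $E_{i,j}$ and each vertex of a shore of $G-E_{i,j}$ with its quotient counterpart. Your write-up is in fact more detailed than the paper's, which asserts the coordinate-flip characterization of $\Theta$ and the two representation correspondences without the explicit verification you supply.
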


\begin{proof}
That the set $U_{i,j}$ is a $\Theta$-class of $G / F_i$ follows directly from the fact that $\ell$ is the Hamming distance and $G / F_i$ is a partial cube (see Lemma \ref{partial}).

Every edge $f \in U_{i,j}$ represents $w'(f)$ edges of $E_{i,j}$. Also, any edge $e \in E_{i,j}$ is represented by some edge $f \in U_{i,j}$, since each edge of $G$ is in exactly one $\Theta$-class of $G$. Hence,
$$|E_{i,j}| = \sum_{f \in U_{i,j}} w'_i(f).$$

Let $s \in \{ 1,2 \}$. Every vertex $x \in N_s(U_{i,j}|(G / F_i))$ represents $w(x)$ vertices of $N_s(E_{i,j}|G)$. Conversely, any vertex $v \in N_s(E_{i,j}|G)$ is represented by exactly one vertex $x \in N_s(U_{i,j}|(G / F_i))$. Therefore,
$$n_s(E_{i,j} | G) = \sum_{x \in N_s(U_{i,j}|(G / F_i))}w(x).$$
\qed
\end{proof}

\noindent
The main result of this paper now follows from Lemma \ref{lemica}.

\begin{theorem}
\label{segi}
Let $G$ be a partial cube. If $\lbrace F_1, \ldots, F_k \rbrace$ is a partition coarser than the $\Theta$-partition, then
$$Sz(G) = \sum_{i=1}^k Sz(G / F_i, w_i, w_i').$$
\end{theorem}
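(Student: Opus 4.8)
The plan is to start from the unweighted formula for $Sz(G)$ given in Proposition \ref{prop1}, which expresses $Sz(G)$ as a sum over all $\Theta$-classes of $G$, and to regroup this sum according to the coarser partition. Since the partition $\{F_1, \ldots, F_k\}$ is coarser than the $\Theta$-partition, each $\Theta$-class of $G$ lies in exactly one block $F_i$; writing each $F_i$ as the union of its $\Theta$-classes $E_{i,1}, \ldots, E_{i,r(i)}$ therefore gives
$$Sz(G) = \sum_{i=1}^{k} \sum_{j=1}^{r(i)} |E_{i,j}|\, n_1(E_{i,j}|G)\, n_2(E_{i,j}|G).$$
The goal then reduces to proving, for each fixed $i$, that the inner sum over $j$ equals $Sz(G/F_i, w_i, w_i')$.

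To compute $Sz(G/F_i, w_i, w_i')$, I would first record the weighted analogue of Proposition \ref{prop1} for the partial cube $G/F_i$ (it is a partial cube by Lemma \ref{partial}). Since $n_s(f|(G/F_i, w_i))$ is constant as $f$ ranges over a fixed $\Theta$-class of $G/F_i$, grouping the defining edge-sum of $Sz(G/F_i, w_i, w_i')$ by $\Theta$-classes yields
$$Sz(G/F_i, w_i, w_i') = \sum_{U} \Big(\sum_{f \in U} w_i'(f)\Big)\, n_1(U|(G/F_i, w_i))\, n_2(U|(G/F_i, w_i)),$$
where $U$ runs over the $\Theta$-classes of $G/F_i$. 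The key structural point, which follows from the Hamming embedding $\ell$ constructed in Lemma \ref{partial}, is that the $\Theta$-classes of $G/F_i$ are precisely the sets $U_{i,1}, \ldots, U_{i,r(i)}$: every edge of $G/F_i$ flips exactly one coordinate of $\ell$, so the $r(i)$ coordinate directions index the $\Theta$-classes, and each $U_{i,j}$ is nonempty because $|E_{i,j}| = \sum_{f \in U_{i,j}} w_i'(f) > 0$ by Lemma \ref{lemica}.

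With this identification in place, I would substitute the three identities supplied by Lemma \ref{lemica}, namely $\sum_{f \in U_{i,j}} w_i'(f) = |E_{i,j}|$ together with $n_s(U_{i,j}|(G/F_i, w_i)) = n_s(E_{i,j}|G)$ for $s \in \{1,2\}$. Term by term this converts the $\Theta$-class sum for $Sz(G/F_i, w_i, w_i')$ into $\sum_{j=1}^{r(i)} |E_{i,j}|\, n_1(E_{i,j}|G)\, n_2(E_{i,j}|G)$, which is exactly the inner sum above. Summing over $i$ from $1$ to $k$ then reassembles the full double sum and recovers $Sz(G)$.

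The argument is essentially bookkeeping once Lemmas \ref{partial} and \ref{lemica} are available, so the only genuine obstacle is making the weighted version of Proposition \ref{prop1} precise and confirming that $\{U_{i,1}, \ldots, U_{i,r(i)}\}$ \emph{exhausts} the $\Theta$-classes of $G/F_i$, rather than merely being contained among them. Both points rest on the fact that $\ell$ realizes distances in $G/F_i$ as Hamming distances, which was already established in the proof of Lemma \ref{partial}; I would state this exhaustiveness explicitly to close the gap.
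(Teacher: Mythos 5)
Your proposal is correct and follows essentially the same route as the paper: both start from Proposition \ref{prop1}, regroup the $\Theta$-class sum according to the coarser partition, and then convert each inner sum into $Sz(G/F_i,w_i,w_i')$ via the three identities of Lemma \ref{lemica} together with the facts that $n_s$ is constant on $\Theta$-classes of $G/F_i$ and that the sets $U_{i,1},\ldots,U_{i,r(i)}$ partition $E(G/F_i)$. The only difference is organizational (you match the two sides in the middle, while the paper writes a single chain of equalities), and your explicit flagging of the exhaustiveness of $\{U_{i,1},\ldots,U_{i,r(i)}\}$ is a point the paper asserts with less comment.
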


\begin{proof}
Let all the notation be the same as in Lemma \ref{lemica}. Recall that $\mathcal{E}$ is a $\Theta$-partition of the set $E(G)$. By Proposition \ref{prop1} we obtain
\begin{eqnarray*}
Sz(G) & = & \sum_{e \in E(G)}n_1(e|G)n_2(e|G) \\
& = & \sum_{E \in \mathcal{E}}|E| n_1(E|G)n_2(E|G) \\
& = & \sum_{i=1}^k \sum_{j=1}^{r(i)} |E_{i,j}|n_1(E_{i,j}|G)n_2(E_{i,j}|G).
\end{eqnarray*} 
 Using Lemma \ref{lemica} it follows
 \begin{eqnarray*}
Sz(G) & = & \sum_{i=1}^k \sum_{j=1}^{r(i)} \Bigg(\sum_{f \in U_{i,j}} w'_i(f) \Bigg) \cdot \Bigg( \sum_{x \in N_1(U_{i,j}|(G / F_i))}w(x) \Bigg) \cdot \Bigg( \sum_{x \in N_2(U_{i,j}|(G / F_i))}w(x) \Bigg) \\
& = & \sum_{i=1}^k \sum_{j=1}^{r(i)} \Bigg(\sum_{f \in U_{i,j}} w'_i(f) \Bigg) \cdot n_1(U_{i,j} | (G / F_i, w_i)) \cdot n_2(U_{i,j} | (G / F_i, w_i)) \\
& = & \sum_{i=1}^k \sum_{j=1}^{r(i)} \Bigg(\sum_{f \in U_{i,j}} w'_i(f)  \cdot n_1(U_{i,j} | (G / F_i, w_i)) \cdot n_2(U_{i,j} | (G / F_i, w_i)) \Bigg) \\
& = & \sum_{i=1}^k \Bigg( \sum_{j=1}^{r(i)} \sum_{f \in U_{i,j}} w'_i(f)  \cdot n_1(f | (G / F_i, w_i)) \cdot n_2(f | (G / F_i, w_i)) \Bigg).
\end{eqnarray*}
In the last step we have taken into account that $U_{i,j}$ is a $\Theta$-class and therefore, $$n_s(U_{i,j} | (G / F_i, w_i)) = n_s(f | (G / F_i, w_i))$$ for any $f \in U_{i,j}$ and $s \in \lbrace 1, 2\rbrace$. Since $\lbrace U_{i,1}, \ldots, U_{i,r(i)}\rbrace$ is a partition of the set $E(G / F_i)$, we finally deduce
\begin{eqnarray*}
Sz(G)& = & \sum_{i=1}^k \Bigg( \sum_{f \in E(G / F_i)} w'_i(f)  \cdot n_1(f | (G / F_i, w_i)) \cdot n_2(f | (G / F_i, w_i)) \Bigg) \\
& = & \sum_{i=1}^k Sz(G / F_i, w_i, w_i')
\end{eqnarray*} 
and the proof is complete. \qed
 
\end{proof}

At the end of this section we also consider the Wiener index of partial cubes. This problem was solved in \cite{klavzar-2016}, where it was shown that the Wiener index of an arbitrary connected vertex-weighted graph can be computed as the sum of the Wiener indices of weighted quotient graphs with respect to a partition coarser than $\Theta^*$-partition. Since for the partial cubes it holds $\Theta = \Theta^*$, we get the following result, which is a corollary of \cite[Theorem 3.3]{klavzar-2016}.

\begin{corollary} \cite{klavzar-2016}
\label{wini}
Let $G$ be a partial cube. If $\lbrace F_1, \ldots, F_k \rbrace$ is a partition coarser than the $\Theta$-partition, then
$$W(G) = \sum_{i=1}^k W(G / F_i, w_i).$$
\end{corollary}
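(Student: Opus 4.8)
The plan is to obtain this result in exactly the same way as Theorem \ref{segi}, simply discarding the edge-weight factor throughout. Before starting, I would record the cheapest route: since $G$ is a partial cube we have $\Theta = \Theta^*$ by Theorem \ref{th:partial-k}(iii), so a partition coarser than the $\Theta$-partition is the same thing as a partition coarser than the $\Theta^*$-partition, and the statement is then an immediate specialization of \cite[Theorem 3.3]{klavzar-2016}. I would nevertheless give a self-contained argument mirroring the Szeged case, since all the machinery (Lemmas \ref{partial} and \ref{lemica}) is already in place.

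The one genuinely new ingredient is the vertex-weighted analogue of the Wiener part of Proposition \ref{prop1}: for any vertex-weighted partial cube $(H,w)$ whose $\Theta$-classes are $U_1,\dots,U_m$,
$$W(H,w) = \sum_{t=1}^{m} n_1(U_t\,|\,(H,w))\, n_2(U_t\,|\,(H,w)).$$
I would prove this by expanding $d_H(u,v)$ as the number of $\Theta$-classes separating $u$ and $v$ — which is exactly the Hamming distance, valid because $H$ is a partial cube — and then interchanging the order of summation in the definition of $W(H,w)$. For a fixed class $U_t$, the pairs it separates are precisely those with one endpoint in $N_1(U_t|H)$ and the other in $N_2(U_t|H)$; collecting the weights of these two sets turns the factor $\tfrac12\sum_{u,v} w(u)w(v)[\,U_t \text{ separates } u,v\,]$ into $n_1(U_t|(H,w))\,n_2(U_t|(H,w))$, which yields the identity. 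Taking $w\equiv 1$ recovers the Wiener formula of Proposition \ref{prop1}.

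With this in hand the computation is pure bookkeeping, paralleling the display chain in the proof of Theorem \ref{segi}. Starting from Proposition \ref{prop1} and splitting the $\Theta$-partition $\mathcal{E}$ according to the coarser classes $F_i = E_{i,1}\cup\dots\cup E_{i,r(i)}$, I would write
$$W(G) = \sum_{i=1}^{k}\sum_{j=1}^{r(i)} n_1(E_{i,j}|G)\, n_2(E_{i,j}|G),$$
and then replace each $n_s(E_{i,j}|G)$ by $n_s(U_{i,j}\,|\,(G/F_i,w_i))$ using the last identity of Lemma \ref{lemica}. By Lemma \ref{partial} each $G/F_i$ is a partial cube, and by Lemma \ref{lemica} the sets $U_{i,1},\dots,U_{i,r(i)}$ are $\Theta$-classes of $G/F_i$ that partition $E(G/F_i)$; hence they are precisely all of its $\Theta$-classes. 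Applying the vertex-weighted Wiener identity to $(G/F_i,w_i)$ therefore collapses the inner sum over $j$ into $W(G/F_i,w_i)$, giving $W(G)=\sum_{i=1}^k W(G/F_i,w_i)$.

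The main (and essentially only) obstacle is proving the weighted Wiener identity for partial cubes, since Proposition \ref{prop1} is stated only for the unweighted index; everything after that is a transcription of the Theorem \ref{segi} argument with the edge factor $|E_{i,j}|=\sum_{f\in U_{i,j}} w_i'(f)$ omitted. I would double-check that dropping the edge weights is exactly what distinguishes the two statements — it is, since the Wiener line of Proposition \ref{prop1} is the Szeged line with every $|E_i|$ deleted — so no separate handling of the edge-weight function $w_i'$ is needed anywhere.
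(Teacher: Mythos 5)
Your proposal is correct, and it actually does more than the paper, which offers no self-contained proof at all: the paper's entire justification for Corollary \ref{wini} is the remark preceding it, namely that $\Theta = \Theta^*$ holds for partial cubes (Theorem \ref{th:partial-k}) and hence the statement is the specialization of \cite[Theorem 3.3]{klavzar-2016} --- exactly the ``cheapest route'' you record in your first paragraph. Your additional self-contained argument is a genuinely different route, and it is sound: the one new ingredient, the vertex-weighted identity $W(H,w) = \sum_{t} n_1(U_t\,|\,(H,w))\, n_2(U_t\,|\,(H,w))$ for a partial cube $H$ with $\Theta$-classes $U_1,\dots,U_m$, holds for just the reason you give, since in a partial cube $d_H(u,v)$ equals the number of $\Theta$-classes separating $u$ and $v$, the class $U_t$ separates precisely the pairs with one endpoint in each of the two components $\langle N_1(U_t|H)\rangle$ and $\langle N_2(U_t|H)\rangle$ of $H - U_t$, and the factor $\tfrac12$ correctly converts ordered pairs to unordered ones. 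The remaining bookkeeping is also legitimate: by Lemmas \ref{partial} and \ref{lemica} the sets $U_{i,1},\dots,U_{i,r(i)}$ are $\Theta$-classes of the partial cube $G/F_i$ and partition $E(G/F_i)$, so they are all of its $\Theta$-classes, and your weighted identity then collapses the inner sum over $j$ to $W(G/F_i,w_i)$. As for what each approach buys: the paper's citation is shorter and rests on a strictly more general theorem (arbitrary connected vertex-weighted graphs, partitions coarser than the $\Theta^*$-partition), while your argument keeps everything inside the paper's own machinery and makes the Wiener and Szeged results visibly parallel --- the Wiener proof is the proof of Theorem \ref{segi} with the edge-weight factor $|E_{i,j}| = \sum_{f \in U_{i,j}} w_i'(f)$ deleted, as you correctly observe.
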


\section{Applications to benzenoid systems and $C_4C_8$ systems}
\label{uporaba}

\noindent
It is known that the Wiener index and the Szeged index of a bezenoid system can be computed as the sum of the corresponding indices of weighted quotient trees, see \cite[Proposition 2]{chepoi-1997} and \cite[Proposition 3]{chepoi-1997}. Moreover, these trees are exactly quotient graphs with respect to a partition coarser than $\Theta$-partition, hence, these results also follows directly from the results from Section 3. Therefore, in this section we will concentrate to the $C_4C_8$ systems. More precisely, for $C_4C_8$ systems we show that the Wiener index and the Szeged index can be computed in linear time.

\noindent
Let ${\cal H}$ be the infinite $C_4C_8$ net and let $Z$ be a cycle in it (see Figure \ref{def}). Then a $C_4C_8$ {\em system} is induced by the vertices and edges of ${\cal H}$, lying on $Z$ and in its interior. Obviously, any $C_4C_8$ system is a bipartite graph, since the whole $C_4C_8$ net is bipartite.

\begin{figure}[h!] 
\begin{center}
\includegraphics[scale=0.8]{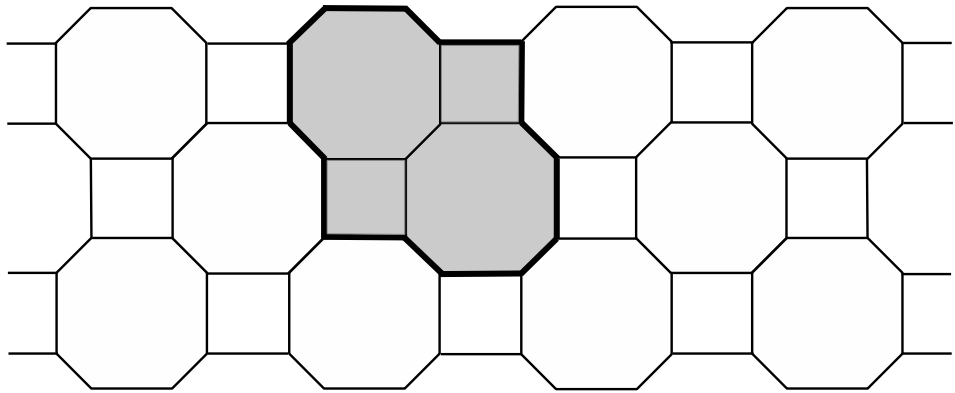}
\end{center}
\caption{\label{def} A cycle $Z$ and the corresponding $C_4C_8$ system.}
\end{figure}

$C_4C_8$ systems are important in chemical graph theory, since they are subgraphs of $TUC_4C_8(S)$ and $TUC_4C_8(R)$ nanotubes and nanotori, which are some of the central and most commonly studied families of chemical graphs (for example, see \cite{AY1,DS,HT1,AY2}). Also, the infinite version of a $C_4C_8$ system was considered in \cite{A}. In addition, $C_4C_8$ systems can represent chemical compounds, formed by the rings of cyclobutane and cyclooctane. 



As in the case of benzenoid systems, one can define elementary cuts as follows (the details on elementary cuts can be found elsewhere, for example, see \cite{gut}). An \textit{elementary cut} $C$ of a $C_4C_8$ system $G$ is a line segment that starts at
the center of a peripheral edge of a $C_4C_8$ system $G$,
goes orthogonal to it and ends at the first next peripheral
edge of $G$. By $C$ we sometimes also denote the set of edges that are intersected by the corresponding elementary cut. 
In the next theorem we show that each $C_4C_8$ system is a partial cube.

\begin{theorem}\label{partial_cube}
	Each $C_4C_8$ system is a partial cube.
\end{theorem}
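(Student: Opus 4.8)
The plan is to verify that each $C_4C_8$ system $G$ satisfies one of the characterizations in Theorem~\ref{th:partial-k}; the cleanest route is to check condition $(iii)$, namely that $G$ is bipartite and that the Djokovi\'c--Winkler relation $\Theta$ coincides with its transitive closure $\Theta^*$. Bipartiteness is already granted, since the infinite $C_4C_8$ net is bipartite and $G$ is an induced subgraph of it. So the whole task reduces to showing $\Theta = \Theta^*$, and the natural tool for this is the elementary cut structure defined just before the statement. The strategy is to prove that the elementary cuts of $G$ are precisely the $\Theta^*$-classes, and that each elementary cut is already a single $\Theta$-class, so no two distinct cuts merge under transitive closure.

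First I would establish the geometric backbone: that each edge of $G$ belongs to exactly one elementary cut, so that the cuts partition $E(G)$. This follows from the construction of a cut as a straight line segment crossing the net orthogonally from one peripheral edge to the next; because the $C_4C_8$ net is drawn with a regular geometry (squares and octagons), such a segment crosses a well-defined sequence of parallel edges and terminates, giving a finite set of edges per cut. Next I would show that any two edges lying in a common elementary cut are in relation $\Theta$: within a single face (a $4$-cycle or an $8$-cycle), opposite edges crossed by the same cut are $\Theta$-related by a direct distance computation, and consecutive edges along the cut share such a face, so the whole cut lies inside one $\Theta^*$-class.

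The converse containment is where the real work lies, and it is the step I expect to be the main obstacle: I must show that if two edges are $\Theta$-related, then they lie in the same elementary cut, equivalently that removing the edge set of one cut $C$ disconnects $G$ into exactly two components with the edges of $C$ being the only edges between them, and that these components are convex. Here I would argue that the line segment defining $C$ separates the plane region occupied by $G$ into two parts, so deleting the crossed edges yields two pieces; convexity amounts to checking that a shortest path in $G$ never needs to cross $C$ more than once, which can be reduced to the observation that crossing $C$ flips a fixed coordinate in a natural labelling of vertices. This last point is essentially the argument used in Lemma~\ref{partial}: assigning to each vertex a binary coordinate per cut (on which side of the cut it lies) gives a candidate isometric embedding into a hypercube, and one verifies that the graph distance equals the resulting Hamming distance by checking that no shortest path recrosses a cut.

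Thus, rather than invoking $(iii)$ abstractly, the embedding approach may in fact be the most economical: I would define, for each elementary cut $C_t$, a coordinate $c_t(x)\in\{0,1\}$ recording the side of $C_t$ on which vertex $x$ lies, assemble these into a labelling $c\colon V(G)\to\{0,1\}^m$ where $m$ is the number of cuts, and then prove the two inequalities between $d_G(x,y)$ and the Hamming distance of $c(x),c(y)$ exactly as in the proof of Lemma~\ref{partial}. The easy inequality comes from exhibiting a path changing one coordinate at a time; the hard inequality requires that a shortest path cannot avoid crossing a cut whose coordinate differs at its endpoints, which is where the geometric separation property of elementary cuts must be used. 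Establishing that separation property carefully for the octagonal faces, not just the square ones, is the crux of the argument.
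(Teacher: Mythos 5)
Your outline follows the same route as the paper (elementary cuts, the separation property, and then either Djokovi\'c's condition $(ii)$ or a direct hypercube embedding), but the one step that carries the actual mathematical content is missing, and you have located the difficulty on the wrong side. Once you know that the cuts partition $E(G)$ and that deleting a cut leaves exactly two components, the inequality $d_G(x,y)\geq$ Hamming distance is immediate: a path avoiding all edges of a cut $C_t$ with $c_t(x)\neq c_t(y)$ would lie entirely in $G-C_t$, contradicting separation, and edge-disjointness of the cuts gives the count. This is the direction you call ``hard.'' What does \emph{not} follow from separation is the reverse inequality $d_G(x,y)\leq$ Hamming distance, equivalently that a shortest path never crosses a cut twice, equivalently that $\langle N_1(e|G)\rangle$ and $\langle N_2(e|G)\rangle$ are convex. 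Your proposed justification --- that this ``can be reduced to the observation that crossing $C$ flips a fixed coordinate'' --- is circular: a shortest path between two vertices on the same side of $C$ could a priori cross $C$, wander on the far side, and cross back, flipping the coordinate twice; nothing in the labelling forbids this. Deferring to the proof of Lemma~\ref{partial} does not repair it, because the step ``one can easily find a path of Hamming length'' in that proof relies on $G$ already being a partial cube (so that shortest paths in $G$ meet each $\Theta$-class at most once), which is exactly the statement you are trying to prove for $C_4C_8$ systems.

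The missing idea is the paper's shortcut argument, and it is where the geometry of the squares and octagons genuinely enters. Suppose $x,y\in N_s(e|G)$ and a shortest path $P$ uses an edge $f_1$ of the elementary cut $C$ through $e$; then $P$ must use a second edge $f_2$ of $C$ to return to the side of $x$ and $y$. Writing $u_1,u_2$ for the endpoints of $f_1,f_2$ on that side and $v_1,v_2$ for the opposite endpoints, the portion of $P$ from $u_1$ to $u_2$ has length at least $d_G(v_1,v_2)+2$, while the ladder structure of a cut (consecutive cut edges are opposite edges of a common quadrilateral or octagonal face, so the two sides of the cut mirror each other along it) gives $d_G(u_1,u_2)\leq d_G(v_1,v_2)$; replacing the middle portion of $P$ by a shortest $u_1$--$u_2$ path therefore produces a strictly shorter $x$--$y$ walk, a contradiction. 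This is the content of the paper's proof of Theorem~\ref{partial_cube}, invoked through condition $(ii)$ of Theorem~\ref{th:partial-k}, and it is precisely the step your proposal asserts but never supplies; by contrast, the separation property for octagonal faces, which you single out as the crux, is the comparatively routine, geometrically evident half.
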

\begin{proof}
Let $G$ be a $C_4C_8$ system. Obviously, $G$ is a bipartite graph. Let $e=ab \in E(G)$ and $C$ an elementary cut that contains $e$. It is easy to see that $G-C$ has exactly two connected components, namely $\langle N_1(e|G) \rangle$ and $\langle N_2(e|G)\rangle$.

To complete the proof, it suffices to show that the graph $\langle N_s(e|G) \rangle$, where $s \in \lbrace 1,2 \rbrace$, is convex. Let $x,y \in N_s(e|G)$ and let $P$ be a shortest path from $x$ to $y$. If $P$ contains an edge $f_1$ of $C$, then obviously contains also one more edge $f_2$ of $C$. Let $u_1 \in N_s(e|G)$ and $u_2 \in N_s(e|G)$ be the end-vertices of $f_1$ and $f_2$, respectively. Moreover, let $P_1$ be the part of $P$ from $x$ to $u_1$ and $P_2$ be a part of $P$ from $u_2$ to $y$. Then the path, composed of path $P_1$, the shortest path from $u_1$ to $u_2$, and path $P_2$ is a path from $x$ to $y$, that is shorter than $P$ - a contradiction. Therefore, it follows that $\langle N_s(e|G) \rangle$ is a convex subgraph of $G$. By Theorem \ref{th:partial-k}, the proof follows. \qed
\end{proof}

Note that from Theorem \ref{partial_cube} and Theorem \ref{th:partial-k} it follows that for each $C_4C_8$ system it holds $\Theta = \Theta^*$. The main insight for our consideration
is that every $\Theta$-class of a $C_4C_8$ system
$G$ coincides with exactly one of its elementary cuts.

\noindent
Let $G$ be a $C_4 C_8$ system and let $E_1, E_2,\ldots, E_r$ be its $\Theta$-classes. Without loss of generality, assume that $G$ contains at least one internal face of length $8$. Based on this assumption we have in $G$ four different directions of edges. Let $F_i$, $i \in \lbrace 1, \ldots, 4\rbrace$, be the union of all $\Theta$-classes with the same direction. Obviously, $\lbrace F_1, F_2, F_3, F_4 \rbrace$ is a partition coarser than the $\Theta$-partition.

\begin{proposition}
Let $G$ be a $C_4C_8$ system and let $\lbrace F_1, F_2, F_3, F_4 \rbrace$ be a partition of $E(G)$ defined as above. Then the graph $G/F_i$ is a tree for each $i \in \lbrace 1,2,3,4 \rbrace$.
\end{proposition}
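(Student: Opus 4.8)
The plan is to combine the partial-cube machinery already developed with the planar geometry of elementary cuts. First I would record two facts that come for free: by Lemma \ref{partial} the quotient $G/F_i$ is a partial cube, hence in particular connected, and by Lemma \ref{lemica} its $\Theta$-classes are precisely the sets $U_{i,1}, \ldots, U_{i,r(i)}$, one for each elementary cut $E_{i,j}$ making up $F_i$. Recalling that a connected graph is a tree exactly when each of its edges is a bridge, and that in a partial cube removing a single $\Theta$-class always leaves exactly two components, it then suffices to prove that each $\Theta$-class $U_{i,j}$ of $G/F_i$ consists of a single edge. Indeed, once $|U_{i,j}| = 1$ for every $j$, every edge of $G/F_i$ is the sole member of its $\Theta$-class and so is a bridge, which forces the connected graph $G/F_i$ to be acyclic, i.e. a tree.

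The heart of the argument is therefore the claim that all edges of a fixed cut $E_{i,j}$ join the same pair of connected components of $G - F_i$; this is exactly the assertion $|U_{i,j}| = 1$, since an edge of the simple graph $G/F_i$ is determined by its two end-components. To establish it, I would list the edges $e_1, \ldots, e_p$ of $E_{i,j}$ in the order in which the straight elementary cut meets them, writing $e_k = a_k b_k$ with the $a_k$ on one side of the cut line and the $b_k$ on the other. Two consecutive edges $e_k, e_{k+1}$ bound a common internal face of $G$, which is either a $C_4$ or a $C_8$ and is crossed by the cut through a pair of opposite, parallel edges, both in direction $i$. The cut splits this face into two arcs, and a direct inspection of the square (whose edges lie in the two diagonal directions only) and of the octagon (two edges in each of the four directions) shows that neither arc uses an edge of direction $i$. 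Consequently each arc lies in $G - F_i$ and joins $a_k$ to $a_{k+1}$, respectively $b_k$ to $b_{k+1}$, inside $G - F_i$.

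Running over all $k$ then places $a_1, \ldots, a_p$ in a single component of $G - F_i$ and $b_1, \ldots, b_p$ in another; these two components are genuinely distinct because $E_{i,j} \subseteq F_i$ is a $\Theta$-class of the partial cube $G$, so the $a_k$ and the $b_k$ already lie in different components of $G - E_{i,j}$ and hence in different components of the finer partition induced by $G - F_i$. Thus every edge of $E_{i,j}$ contracts to the one quotient edge joining these two components, giving $|U_{i,j}| = 1$ and completing the reduction above.

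I expect the only real obstacle to be the geometric bookkeeping in the middle step: one must be certain that a straight cut always enters and leaves each traversed $C_4$ or $C_8$ through two parallel edges of direction $i$, and that the two complementary arcs of such a face never contain a further edge of direction $i$. This rests on the rigidity of the $C_4C_8$ net and is precisely the point at which the hypothesis that $G$ is a $C_4C_8$ system, rather than an arbitrary partial cube, is actually used.
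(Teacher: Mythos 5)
Your proof is correct, but it follows a genuinely different route from the paper. The paper argues by contradiction on cycles: by Lemma \ref{partial} the quotient $G/F_i$ is bipartite, so a shortest cycle in it would have length at least $4$, and the authors then invoke the planar geometry of the net to claim that a $4$-cycle $X_1X_2X_3X_4$ (or any longer cycle) in $G/F_i$ would force $G$ to contain an internal face of length at least $10$, contradicting the fact that all internal faces of a $C_4C_8$ system are $4$- or $8$-gons. You instead argue directly: using Lemma \ref{partial} (connectedness, and the fact that deleting a $\Theta$-class of a partial cube leaves exactly two components) and Lemma \ref{lemica} (the sets $U_{i,j}$ are the $\Theta$-classes of $G/F_i$), you reduce the statement to showing $|U_{i,j}|=1$ for every $j$, i.e.\ that all edges of one elementary cut $E_{i,j}$ join the same pair of components of $G-F_i$; this you verify by walking along the cut and checking locally that the two arcs of each traversed $C_4$ or $C_8$ contain no edge of direction $i$, hence stay inside $G-F_i$. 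Each approach has its merits: the paper's is shorter but leaves its key geometric step ("obviously $G$ contains a face of length at least $10$", "the same holds for longer cycles") quite informal, whereas yours localizes all the geometry to a one-face inspection and exploits the machinery of Lemmas \ref{partial} and \ref{lemica} that the paper has already built. Your argument also yields a bonus the paper's does not state explicitly: the edges of the quotient tree $G/F_i$ are in bijection with the elementary cuts of direction $i$, with $w_i'$ of a quotient edge equal to the size of the corresponding cut, which is exactly the structure used later in the linear-time computation. Both proofs, as they must, ultimately use the rigidity of the $C_4C_8$ net --- yours through the way a straight cut crosses a face through opposite parallel edges, the paper's through the bound on face lengths.
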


\begin{proof}
Suppose that $G/F_i$ is not a tree and let $C$ be a cycle in $G/F_i$. Obviously, by Lemma \ref{partial} the graph $G/F_i$ is bipartite and therefore, the shortest possible cycle in it is of length $4$. 

Suppose that $C = X_1X_2X_3X_4$ is a cycle in $G/F_i$ of length $4$. Therefore, we obtain the situation in Figure \ref{cikel}. 

\begin{figure}[h!] 
\begin{center}
\includegraphics[scale=0.8]{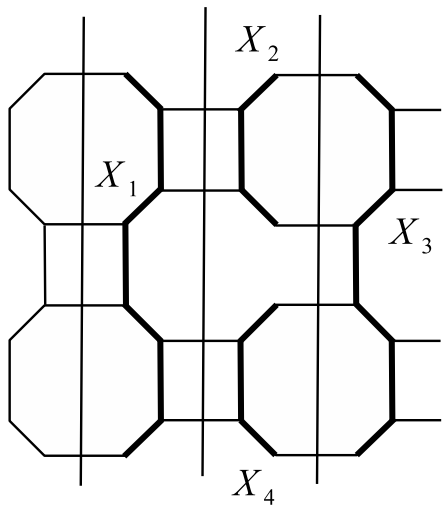}
\end{center}
\caption{\label{cikel} A cycle $C=X_1X_2X_3X_4$ in $G / F_i$.}
\end{figure}

Obviously, in such a case, the graph $G$ contains an internal face whose boundary is a cycle of length at least $10$, which is a contradiction. It is easy to see that the same holds if $C$ has length more than $4$. Therefore, there is no cycle  in $G/F_i$ and thus, $G/F_i$ is a tree. \qed
\end{proof}

Next, we extend the quotient trees $G/F_i$, $i \in \lbrace 1, 2, 3, 4 \rbrace$, to weighted trees $(G/F_i, w_i)$ and $(G/F_i,w_i,w_i')$ as in Section \ref{main}.

\noindent
The following result is a special case of Theorem \ref{segi} and Corollary \ref{wini}.

\begin{corollary}
\label{thm:Wiener}
If $G$ is a $C_4C_8$ system, then 
$$W(G)= W(G/F_1,w_1) + W(G/F_2,w_2) + W(G/F_3,w_3) + W(G/F_4,w_4)$$
and
$$Sz(G)= Sz(G/F_1,w_1,w_1') + Sz(G/F_2,w_2,w_2') + Sz(G/F_3,w_3,w_3') + Sz(G/F_4,w_4,w_4').$$
\end{corollary}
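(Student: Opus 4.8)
The plan is to recognize that the final statement (Corollary \ref{thm:Wiener}) is an immediate application of the two main results already established, specialized to the particular partition $\lbrace F_1, F_2, F_3, F_4 \rbrace$ built from the four edge directions of a $C_4C_8$ system. First I would verify that the hypotheses of Theorem \ref{segi} and Corollary \ref{wini} are met. By Theorem \ref{partial_cube} we know that every $C_4C_8$ system $G$ is a partial cube, so both results apply to it. By the remark preceding the corollary, $\lbrace F_1, F_2, F_3, F_4 \rbrace$ was defined as the grouping of all $\Theta$-classes sharing a common direction, and it was explicitly observed there that this is a partition of $E(G)$ coarser than the $\Theta$-partition. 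Thus the two invoked theorems are directly applicable with $k = 4$ and the partition $\lbrace F_1, F_2, F_3, F_4 \rbrace$.

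The second step is simply substitution. Applying Corollary \ref{wini} with this partition yields
$$W(G) = \sum_{i=1}^{4} W(G / F_i, w_i),$$
which, written out, is precisely the first displayed equation of the statement. Likewise, applying Theorem \ref{segi} gives
$$Sz(G) = \sum_{i=1}^{4} Sz(G / F_i, w_i, w_i'),$$
which expands term by term into the second displayed equation. Here the weights $w_i$ and $w_i'$ are exactly the vertex-weights and edge-weights defined in Section \ref{main} (namely $w_i(x)$ counts the vertices in the component $x$ of $G - F_i$, and $w_i'(xy)$ counts the edges of $F_i$ joining components $x$ and $y$), and the paragraph just before the corollary states that the trees $G/F_i$ are extended to weighted trees in exactly this way. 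So no new definitions are needed.

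Since this is a pure specialization, there is essentially no mathematical obstacle to overcome: the hard work has already been done in proving Lemma \ref{partial}, Lemma \ref{lemica}, Theorem \ref{segi}, and in citing \cite{klavzar-2016} for Corollary \ref{wini}. The only point deserving a sentence of care is the standing assumption, made just before the corollary, that $G$ contains at least one internal face of length $8$; this guarantees that there are exactly four edge directions, and hence that the partition genuinely has four blocks $F_1, \ldots, F_4$ rather than fewer. I would therefore close the argument by noting that under this assumption the partition into four directions is well-defined and coarser than the $\Theta$-partition, so that both master results apply verbatim and the two claimed identities follow at once.
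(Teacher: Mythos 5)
Your proposal is correct and matches the paper's treatment exactly: the paper gives no separate proof, simply noting that the corollary is a special case of Theorem \ref{segi} and Corollary \ref{wini} applied to the partition $\lbrace F_1, F_2, F_3, F_4 \rbrace$, which is coarser than the $\Theta$-partition of the $C_4C_8$ system (a partial cube by Theorem \ref{partial_cube}). Your extra remark about the standing assumption of an internal face of length $8$ guaranteeing exactly four direction classes is a sensible point of care, fully consistent with the paper's setup.
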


Our next goal is to prove that the Wiener index and the Szeged of a $C_4C_8$ system can be computed in $O(n)$ time. Since the weighted quotient trees can be computed in $O(n)$ time, it suffices to show that the Wiener indices and the Szeged indices of weighted trees can be computed in $O(n)$ time. In \cite{chepoi-1997} it was mentioned that if $(T,w)$ is a vertex-weighted tree, then the Wiener index can be computed as
\begin{equation} \label{wie_tree} W(T,w) = \sum_{e \in E(T)}n_1(e|T)n_2(e|T). 
\end{equation}
It was also shown that using this formula, the Wiener index of a weighted tree can be computed in linear time.

\noindent
Now we prove that the weighted Szeged index of a weighted tree can be computed in $O(n)$ time.

\begin{proposition}
\label{linear}
Let $(T,w,w')$ be a weighted tree with $n$ vertices. Then the weighted Szeged index $Sz(T,w,w')$ can be computed in linear time.
\end{proposition}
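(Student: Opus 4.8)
The plan is to show that the weighted Szeged index
$$Sz(T,w,w') = \sum_{e \in E(T)} w'(e)\, n_1(e|(T,w))\, n_2(e|(T,w))$$
can be evaluated by computing, for every edge $e$, the two quantities $n_1(e|(T,w))$ and $n_2(e|(T,w))$ in total linear time. The key observation is that removing an edge $e=uv$ from a tree splits it into exactly two components, so $N_1(e|T)$ and $N_2(e|T)$ are precisely the vertex sets of these two components. Hence $n_1(e|(T,w))$ equals the total weight of the vertices on the $u$-side and $n_2(e|(T,w))$ equals the total weight on the $v$-side. Since every vertex lies on exactly one side, these two sums satisfy $n_1(e|(T,w)) + n_2(e|(T,w)) = W_{\text{tot}}$, where $W_{\text{tot}} = \sum_{x \in V(T)} w(x)$. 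Therefore it suffices to compute, for each edge, the total weight of one of its two sides.

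First I would root the tree $T$ at an arbitrary vertex $\rho$ and perform a single depth-first (or breadth-first) traversal to compute, for every vertex $x$, the quantity $s(x)$ defined as the sum of the weights $w(y)$ over all descendants $y$ of $x$ (including $x$ itself). This can be done by a post-order processing in which $s(x) = w(x) + \sum_{c} s(c)$, where the sum runs over the children $c$ of $x$; each vertex and each edge is touched a constant number of times, so this phase runs in $O(n)$ time. Next, for each non-root vertex $x$ with parent $p$, the edge $e = xp$ is split by removal into the subtree rooted at $x$ (of total weight $s(x)$) and the rest of the tree (of total weight $W_{\text{tot}} - s(x)$). Thus we may set $n_1(e|(T,w)) = s(x)$ and $n_2(e|(T,w)) = W_{\text{tot}} - s(x)$; the edge weight $w'(e)$ is read directly from the input.

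Having both side-weights available for each of the $n-1$ edges, I would accumulate the sum $\sum_{e} w'(e)\, s(x)\,(W_{\text{tot}} - s(x))$ in a single final pass over the edges, which is again $O(n)$. Combining the traversal for $W_{\text{tot}}$, the post-order computation of all $s(x)$, and the accumulation step, the overall running time is $O(n)$, as claimed.

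I do not expect a serious obstacle here, since the argument is a standard rerooting/subtree-sum computation; the only point requiring care is the correctness justification that $N_1(e|T)$ and $N_2(e|T)$ coincide with the two components of $T - e$. This follows because in a tree the unique path between any vertex $z$ and an endpoint of $e$ either crosses $e$ or does not, so $z$ is strictly closer to the endpoint lying in its own component, which places $z$ in exactly one of $N_1(e|T)$ or $N_2(e|T)$ and leaves no vertex equidistant. Once this is established, the linear-time bound is immediate from the fact that each of the three passes visits every vertex and edge a bounded number of times.
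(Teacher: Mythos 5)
Your proof is correct and follows essentially the same approach as the paper's: root the tree, compute subtree weight sums bottom-up in a single traversal, identify the two sides of each edge $e=xp$ as having weights $s(x)$ and $W_{\mathrm{tot}}-s(x)$, and accumulate $\sum_e w'(e)\,s(x)\,(W_{\mathrm{tot}}-s(x))$ in linear time. The only cosmetic difference is that the paper folds the accumulation into the same upward pass (propagating partial Szeged sums to the root) rather than using a separate final pass over the edges, and you additionally spell out why $N_1(e|T)$ and $N_2(e|T)$ coincide with the two components of $T-e$, which the paper treats as obvious.
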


\begin{proof}
The proof is similar to the proofs from \cite{chepoi-1997}, \cite{kelenc} and \cite{tratnik}. For the sake of completeness, we give the proof anyway. 

Let $T$ be a rooted tree with a root $x$ and label the vertices of $T$ such that, if a vertex $y$ is labelled $\ell$, then all vertices in the subtree rooted at $y$ have labels smaller than $\ell$. Using the standard BFS (breadth-first search) algorithm this can be done in linear time. Next we visit all the vertices of $T$ according to this labelling (we start with the smallest label). Furthermore, for every vertex $y$, we will adopt weights $w(y)$ and calculate new weights $s(y)$.

\noindent
Assume that we are visiting some vertex $y \in V(T)$. The new weight $w(y)$ will be computed as the sum of all the weights of vertices in the subtree rooted at $y$. 
\begin{itemize}
\item If $y$ is a leaf, then $w(y)$ is left unchanged.
\item If $y$ is not a leaf, then update $w(y)$ by adding to it $w(z)$ for all down-neighbours $z$ of $y$.
\end{itemize}

\noindent
Obviously, for every vertex $y$ of the tree $T$, we can consider the subtree rooted at $y$ as a connected component of the graph $T - e$, where $e$ is the up-edge of $y$. Therefore, $n_1(e|T)=w(y)$. Let $n(T)=\sum_{u \in V(T)} w(u)$ (this can be computed in linear time). It follows that $n_2(e|T) = n(T) - w(y)$. Let $X$ be the sum of numbers $s(z)$ for all down-neighbours $z$ of $y$ (and $X=0$ if $y$ is a leaf). Finally, set $s(y) = X + w'(e)n_1(e|T) n_2(e|T)$ if $y \neq x$ and $s(y) = X$, if $y=x$. It is obvious that $s(x) = Sz(T,w,w')$ and the proof is complete. \qed
\end{proof}

\noindent
Finally, since the weighted quotient trees can be computed in $O(n)$ time, Corollary \ref{thm:Wiener}, Equation \ref{wie_tree}, and Proposition \ref{linear} imply the following theorem.

\begin{theorem}
The Wiener index and the Szeged index of a $C_4C_8$ system with $n$ vertices can be computed in $O(n)$ time.
\end{theorem}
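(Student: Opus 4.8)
The plan is to assemble the linear-time algorithm directly from the decomposition in Corollary \ref{thm:Wiener} together with the linear-time subroutines already available for weighted trees. Since Corollary \ref{thm:Wiener} expresses both $W(G)$ and $Sz(G)$ as a sum of exactly four terms, one for each direction class $F_i$, it suffices to show that each weighted quotient tree $(G/F_i, w_i, w_i')$ can be built in $O(n)$ time and that $W(G/F_i, w_i)$ and $Sz(G/F_i, w_i, w_i')$ can then each be evaluated in $O(n)$ time. The latter two facts are in hand: Equation \ref{wie_tree}, whose linear-time evaluation is noted immediately after it, handles the Wiener index, while Proposition \ref{linear} handles the weighted Szeged index. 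Hence the whole argument reduces to constructing the four weighted trees within the stated bound.

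First I would bound the input size. A $C_4C_8$ system has maximum degree $3$, so $|E(G)| = O(|V(G)|) = O(n)$, and every graph search over $G$ therefore runs in $O(n)$ time. Next, for a fixed $i$, I would delete the edge set $F_i$ and compute the connected components of $G - F_i$ by a single breadth-first search, recording for each vertex the label of its component and accumulating the component sizes; this simultaneously yields the vertex set of $G/F_i$ and the weights $w_i$. Scanning the edges of $F_i$ once and, for each such edge, incrementing a counter indexed by the pair of component labels of its endpoints then produces the adjacencies of $G/F_i$ together with the edge weights $w_i'$. Each pass is $O(n)$, and since there are only four classes $F_1, F_2, F_3, F_4$, all four weighted trees are produced in $O(n)$ time in total.

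The one step needing care is obtaining the partition $\{F_1, F_2, F_3, F_4\}$ itself in linear time, since the $\Theta$-classes coincide with the elementary cuts of $G$ and these must be traced in the plane embedding. The hard part will therefore be the bookkeeping of walking each elementary cut: starting from a peripheral edge, one crosses successive opposite edges of the incident $C_4$- and $C_8$-faces until reaching another peripheral edge, assigning every crossed edge to the direction class determined by the starting edge. Because each edge of $G$ lies in exactly one elementary cut, the total work of tracing all cuts is proportional to $|E(G)| = O(n)$, and the four directions are read off directly from the embedding. Once the classes are labelled, the component-labelling and edge-weighting passes above complete the construction, and summing the four Wiener (respectively Szeged) contributions via Corollary \ref{thm:Wiener} delivers $W(G)$ (respectively $Sz(G)$) in $O(n)$ time, as claimed.
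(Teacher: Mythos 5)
Your proposal is correct and follows essentially the same route as the paper: Corollary \ref{thm:Wiener} reduces both indices to four weighted quotient trees, Equation \ref{wie_tree} and Proposition \ref{linear} evaluate each tree in linear time, and the quotient trees themselves are constructed in $O(n)$ time. The only difference is that the paper simply asserts that the weighted quotient trees can be computed in $O(n)$ time, whereas you supply the missing implementation details (tracing elementary cuts to obtain $\{F_1,F_2,F_3,F_4\}$, BFS for components and weights), which strengthens rather than alters the argument.
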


It is already known that the Wiener index and the Szeged index of benzenoid systems can be computed in linear time \cite{klavzar-1997}. In the case of benzenoid systems and $C_4C_8$ systems the computation  in linear time is possible since the number of $\Theta$-classes, which have the same direction, is fixed and the corresponding quotient graphs are trees. In the general case, the quotient graphs may not be trees. Therefore, this technique can not be applied for partial cubes in general. 

Moreover, it is known that the Wiener index of benzenoid system can be computed in sub-linear time (for the details see \cite{chepoi-1998}). Therefore, we state the following open problem.

\begin{problem}
Let $G$ be a $C_4C_8$ system. Can the Wiener index and the Szeged index of $G$ be computed in sub-linear time.
\end{problem}

To conclude this section we demonstrate the procedure for computing the Wiener index and the Szeged index in the following example. Let $G$ be a $C_4 C_8$ system as in Figure \ref{ex:primer}.

\begin{figure}[h!] 
\begin{center}
\includegraphics[scale=0.6]{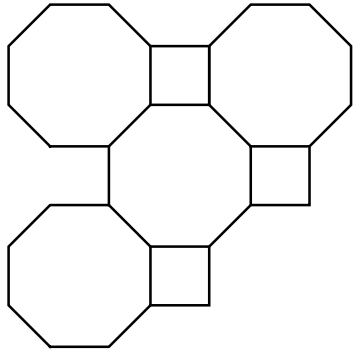}
\end{center}
\caption{\label{ex:primer} A $C_4C_8$ system.}
\end{figure}

\noindent
As we mentioned above, we get the following division of $G$ with respect to $\lbrace F_1,F_2,F_3,F_4 \rbrace$ 
(recall that $\lbrace F_1,F_2,F_3,F_4 \rbrace$ is a partition coarser than the $\Theta$-partition).

\noindent
Consequently, we obtain the weighted trees as shown in Figure \ref{ex:primer1} and Figure \ref{ex:primer2}.

\begin{figure}[h!] 
\begin{center}
\includegraphics[scale=0.7]{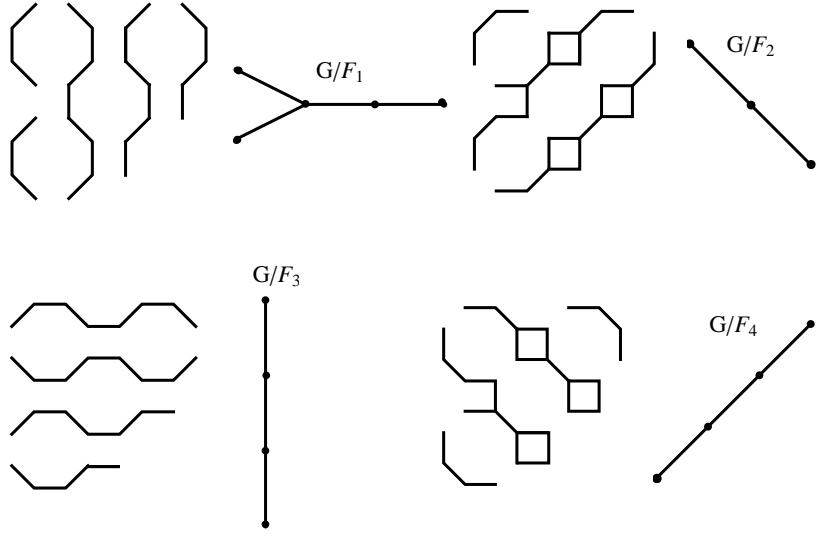}
\end{center}
\caption{\label{ex:primer1} A $C_4C_8$ system and the corresponding trees of a $C_4C_8$ system from Figure \ref{ex:primer}.}
\end{figure}

\begin{figure}[h!] 
\begin{center}
\includegraphics[scale=0.6]{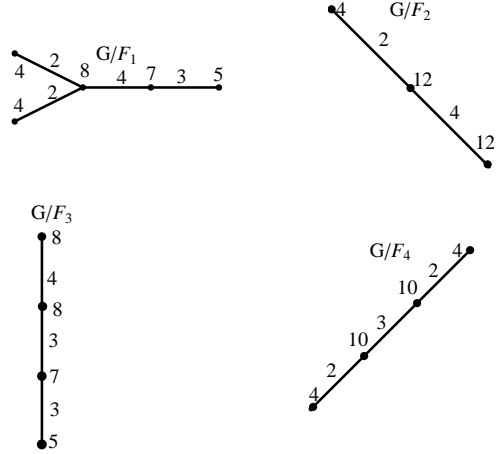}
\end{center}
\caption{\label{ex:primer2} The corresponding weighted trees $(G/F_1,w_1,w_1')$, $(G/F_2,w_2,w_2')$, $(G/F_3,w_3,w_3')$, and $(G/F_4,w_4,w_4')$ of a $C_4C_8$ system from Figure \ref{ex:primer}.}
\end{figure}

\noindent
Using Equation (\ref{wie_tree}) we obtain
\begin{align*}
	W(G/F_1,w_1) & = 4\cdot(4+8+7+5) + 4\cdot(4+8+7+5) + (4+4+8)\cdot(7+5) +  \\ 
						 & + (4+4+8+7)\cdot 5 = 499 \\
	W(G/F_2,w_2) & = 4\cdot(12+12) + (4+12) \cdot 12 = 288\\	
	W(G/F_3,w_3) & = 8\cdot(8+7+5) + (8+8)\cdot(7+5) + (8+8+7)\cdot 5 = 467 \\	
	W(G/F_4,w_4) & = 4\cdot(10+10+4) + (4+10)\cdot(10+4) + (4+10+10)\cdot 4 = 388
\end{align*}
and by Corollary \ref{thm:Wiener} it follows that
	$$W(G) = \sum_{i=1}^{4} W(G/F_i,w_i) = 1642.$$
Next,
\begin{align*}
	Sz(G/F_1,w_1,w_1') & = 2\cdot 4\cdot(4+8+7+5)+2\cdot 4\cdot(4+8+7+5)+4\cdot(4+8+4)\cdot(7+5) \\
	& + 3\cdot(4+4+8+7)\cdot5 = 1497\\
	Sz(G/F_2,w_2,w_2') & = 2\cdot 4 \cdot (12+12) + 4\cdot (4+12)\cdot 12 = 960\\	
	Sz(G/F_3,w_3,w_3') & = 4\cdot 8\cdot(8+7+5)+3\cdot(8+8)\cdot(7+5)+3\cdot(8+8+7)\cdot 5=1561 \\	
	Sz(G/F_4,w_4,w_4') & = 2\cdot 4\cdot(10+10+4)+3\cdot(4+10)\cdot(10+4)+2\cdot(4+10+10)\cdot 4 = 972
\end{align*}
and by Corollary \ref{thm:Wiener},
$$Sz(G) = \sum_{n=1}^{4} Sz(G/F_i,w_i,w_i') = 4990.$$


\section*{Acknowledgement} The author Matev\v z \v Crepnjak acknowledge the financial support from the Slovenian Research Agency (research core funding No. P1-0285). 

The author Niko Tratnik was finacially supported by the Slovenian Research Agency.
\baselineskip=16pt

\newpage

\end{document}